\newtheorem{theorem}{Theorem}[section]
\newtheorem{proposition}[theorem]{Proposition}
\newtheorem{corollary}[theorem]{Corollary}
\newtheorem{definition}[theorem]{Definition}
\newcommand{\N}{\mathbb{N}}
\newcommand{\R}{\mathbb{R}}
\newcommand{\eps}{\varepsilon}
\newcommand{\vv}[1]{\overrightarrow{#1}}
\newcommand{\piv}[2]{\pi(\vv{#1},\vv{#2})}
\newcommand{\piiv}[3]{\pi_{#1}(\vv{#2},\vv{#3})}
\newcommand{\Hh}{\mathbb{H}}
\DeclareMathOperator{\arcosh}{arcosh}
\title{Products of hyperbolic spaces}
\author{Pedro Pinto${}^{a}$ and Andrei Sipo\c s${}^{b,c,d}$\\[2mm]
\footnotesize ${}^a$Department of Mathematics, Technische Universit\"at Darmstadt,\\
\footnotesize Schlossgartenstra\ss{}e 7, 64289 Darmstadt, Germany\\[1mm]
\footnotesize ${}^b$Research Center for Logic, Optimization and Security (LOS), Department of Computer Science,\\
\footnotesize Faculty of Mathematics and Computer Science, University of Bucharest,\\
\footnotesize Academiei 14, 010014 Bucharest, Romania\\[1mm]
\footnotesize ${}^c$Simion Stoilow Institute of Mathematics of the Romanian Academy,\\
\footnotesize Calea Grivi\c tei 21, 010702 Bucharest, Romania\\[1mm]
\footnotesize ${}^d$Institute for Logic and Data Science,\\
\footnotesize Popa Tatu 18, 010805 Bucharest, Romania\\[2mm]
\footnotesize Emails: pinto@mathematik.tu-darmstadt.de, andrei.sipos@fmi.unibuc.ro\\
}
\date{}
\begin{document}

\maketitle

\begin{abstract}
The class of uniformly smooth hyperbolic spaces was recently introduced by the first author as a common generalization of both CAT(0) spaces and uniformly smooth Banach spaces, in a way that Reich's theorem on resolvent convergence could still be proven. We define products of such spaces, showing that they are reasonably well-behaved. In this manner, we provide the first example of a space for which Reich's theorem holds and which is neither a CAT(0) space, nor a convex subset of a normed space.

\noindent {\em Mathematics Subject Classification 2020}: 51F99, 52A55, 53C23.

\noindent {\em Keywords:} Product spaces, uniform convexity, uniform smoothness, hyperbolic spaces.
\end{abstract}

\section{Introduction}

In the last few decades, there has been a renewed interest in nonlinear (metric) generalizations of the classical structures of functional analysis, primarily motivated by fixed point theory and convex optimization. The center of these developments has been the class of {\it CAT(0) spaces}, which are metric spaces that aim to model the idea of non-positive curvature (directly generalizing inner product spaces). More general spaces of a `hyperbolic' nature have also been studied, and a particularly flexible notion has been introduced by Kohlenbach \cite{Koh05} under the name of {\it $W$-hyperbolic spaces} (see the next section for a definition and \cite[pp. 384--388]{Koh08} for a detailed discussion on the relationship between various definitions of hyperbolicity).

Recently, the first author \cite{Pin24} has defined a subclass of $W$-hyperbolic spaces which he called {\it uniformly smooth hyperbolic spaces}. In order to understand the motivation for this concept, in addition to generalizing the Banach space notion of uniform smoothness, we shall step back a bit to look at the context.

If $X$ is a metric space possessing enough convexity structure, and $T:X \to X$ is a nonexpansive mapping, one can define, for any $x \in X$, the point $x_t$, called a `resolvent', by the implicit definition $x_t=tTx_t+(1-t)x$ (existence and uniqueness being guaranteed by the Banach contraction principle). One may then ask about the strong convergence of the path $(x_t)$ (for $t \to 1$), and, for $X$ being a bounded convex subset of a Hilbert space, this was first proven in 1967 by Browder \cite{Bro67A} (and, in the same year, a more elementary proof was given by Halpern \cite{Hal67}). A little more than a decade later, the corresponding result for uniformly smooth Banach spaces, requiring significantly new ideas, was proven by Reich \cite{Rei80}.

In the past twenty years, these kinds of convergence results have begun to be studied from the point of view of {\it proof mining}, which is a research paradigm (first suggested by Kreisel in the 1950s under the name of `unwinding of proofs' and then highly developed starting in the 1990s by Kohlenbach and his students and collaborators) that aims to analyze ordinary mathematical proofs using tools from proof theory (a branch of mathematical logic) in order to extract additional -- typically quantitative -- information out of them (for more information, see Kohlenbach's monograph \cite{Koh08} and his recent research survey \cite{Koh19}).

The analysis of Browder's result was obtained by Kohlenbach in 2011 \cite{Koh11}; its sequel, the work on Reich's theorem, had to wait, like the original theorem itself, around ten years before it was finally achieved by Kohlenbach and the second author \cite{KohSip21}, being one of the most intricate results ever produced by the program of proof mining (see also the second author's recent extension \cite{Sip24} to Reich's original setting of accretive operators). The goals of these endeavours were to obtain quantitative results, namely so-called `rates of metastability'; what is more relevant here is that a proof-theoretic analysis usually strips down a proof to its bare essence, leaving it more suitable to generalization.

Such a generalization was needed, as per the aforementioned paper \cite{Pin24}, since the theorem had also been proven, in the meantime, in the setting of complete CAT(0) spaces (also known as Hadamard spaces) by Saejung \cite{Sae10} (and then proof-theoretically analyzed by Kohlenbach and Leu\c stean \cite{KohLeu12A}), and, thus, one had to have a framework to encompass both CAT(0) spaces and uniformly smooth Banach spaces, this being achieved by the class of uniformly smooth hyperbolic spaces. One additional issue was that Kohlenbach and the second author, in order to tame the proof of Reich's theorem and make it amenable to analysis and to generalization, introduced the additional uniform convexity hypothesis on the Banach space (thus, still maintaining $L^p$ spaces as a particular case of the theorem) -- fortunately, this notion had already been formulated in the hyperbolic setting by Leu\c stean \cite{Leu07,Leu10} in the form of $UCW$-hyperbolic spaces. Thus, the first author showed that Reich's theorem and its consequences do hold for complete uniformly smooth $UCW$-hyperbolic spaces. The generalization of the proof-theoretic analysis itself was ultimately relegated to the forthcoming paper \cite{PinXX}.

Now, the introduction of this new abstract notion raises the question of whether it solely provides a unified proof of the theorem for the two known cases or whether one has properly extended the class of spaces for which the theorem holds. This is the question that this paper aims to answer. 

Towards that end, we take the existing construction of a product of metric spaces and show that it extends all the way to uniformly smooth hyperbolic spaces. We also show that uniform convexity is preserved under products, by generalizing ideas due to Clarkson \cite{Cla36}. Finally, we give an example of a product space for which we show that it cannot be either a CAT(0) space or a convex subset of a normed space.

Section~\ref{sec:ns} recalls the main facts about the classes of spaces under consideration. Section~\ref{sec:main} presents all the results on the product construction, showing various preservation properties. Section~\ref{sec:xmpl} exhibits the concrete example mentioned above.

The symbol $\|\cdot\|$, unless specified otherwise, will denote the usual Euclidean norm.

\section{Nonlinear spaces}\label{sec:ns}

One says that a metric space $(X,d)$ is {\it geodesic} if for any two points $x$, $y \in X$ there is a {\it geodesic} that joins them, i.e.\ a mapping $\gamma : [0,1] \to X$ such that $\gamma(0)=x$, $\gamma(1)=y$ and for any $t$, $t' \in [0,1]$ we have that
$$d(\gamma(t),\gamma(t')) = |t-t'| d(x,y).$$

Generally, geodesics need not be unique, and, thus, one is led to the following definition, which considers geodesics on a metric space as an additional structure\footnote{The convexity function $W$ was first considered by Takahashi in \cite{Tak70} where a triple $(X,d,W)$ satisfying $(W1)$ is called a convex metric space. The notion used here, frequently considered nowadays to be the nonlinear generalization of convexity in normed spaces, was introduced by Kohlenbach in \cite{Koh05}. As said in the Introduction, see \cite[pp. 384--388]{Koh08} for a detailed discussion on the relationship between various definitions of hyperbolicity and on the proof-theoretical considerations that ultimately led to the adoption of this one. We only mention here that this notion is more general than that of hyperbolic spaces in the sense of Reich and Shafrir \cite{ReiSha90}, and slightly more restrictive than the setting due to Goebel and Kirk \cite{GoeKir83} of spaces of hyperbolic type. } (and not as a property), which is required to satisfy additional properties.

\begin{definition}
A {\bf $W$-hyperbolic space} is a triple $(X,d,W)$ where $(X,d)$ is a metric space and $W: X^2 \times [0,1] \to X$ such that, for all $x$, $y$, $z$, $w \in X$ and $\lambda$, $\mu \in [0,1]$, we have that
\begin{enumerate}[(W1)]
\item $d(z,W(x,y,\lambda)) \leq (1-\lambda)d(z,x) + \lambda d(z,y)$;
\item $d(W(x,y,\lambda),W(x,y,\mu)) = |\lambda-\mu|d(x,y)$;
\item $W(x,y,\lambda)=W(y,x,1-\lambda)$;
\item $d(W(x,z,\lambda),W(y,w,\lambda))\leq(1-\lambda) d(x,y) + \lambda d(z,w)$.
\end{enumerate}
\end{definition}

Clearly, any normed space may be made into a $W$-hyperbolic space in a canonical way. 

A subset $C$ of a $W$-hyperbolic space $(X,d,W)$ is called {\it convex} if, for any $x$, $y \in C$ and $\lambda \in [0,1]$, $W(x,y,\lambda) \in C$. If $(X,d,W$) is a $W$-hyperbolic space, $x$, $y \in X$ and $\lambda \in [0,1]$, we denote the point $W(x,y,\lambda)$ by $(1-\lambda)x+\lambda y$. We will, also, mainly write $\frac{x+y}2$ for $\frac12 x + \frac12 y$.

\begin{proposition}\label{quad}
Let $(X,d,W)$ be a $W$-hyperbolic space and $x$, $y$, $a \in X$. Then
$$d^2\left(\frac{x+y}2,a\right) \leq \frac{d^2(x,a)+d^2(y,a)}2 \leq \max(d^2(x,a),d^2(y,a)).$$
\end{proposition}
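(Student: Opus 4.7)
The plan is to split the two inequalities and handle them separately, with the right-hand inequality being essentially trivial and the left-hand one reducing, via (W1), to a one-variable convexity estimate.

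For the right-hand inequality, I would simply note that for any two nonnegative reals $u,v$, we have $\frac{u+v}{2} \leq \max(u,v)$; applying this to $u = d^2(x,a)$ and $v = d^2(y,a)$ gives the claim with no use of the hyperbolic structure at all.

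For the left-hand inequality, the key input is axiom (W1) of the definition of a $W$-hyperbolic space. Setting $z = a$ and $\lambda = \frac12$ in (W1) yields directly
$$d\!\left(\frac{x+y}{2},a\right) \;\leq\; \frac{d(x,a)+d(y,a)}{2}.$$
Squaring both sides (both are nonnegative) and then applying the elementary numerical inequality $\left(\frac{u+v}{2}\right)^{\!2} \leq \frac{u^2+v^2}{2}$ (which is equivalent to $(u-v)^2 \geq 0$) with $u = d(x,a)$ and $v = d(y,a)$ gives
$$d^2\!\left(\frac{x+y}{2},a\right) \;\leq\; \left(\frac{d(x,a)+d(y,a)}{2}\right)^{\!2} \;\leq\; \frac{d^2(x,a)+d^2(y,a)}{2},$$
which is exactly what is required.

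There is no real obstacle here: the only structural hypothesis used is (W1), and everything else is the convexity of the squaring function on $[0,\infty)$. I would expect the proof in the paper to be essentially a one-line chain of inequalities along the lines above.
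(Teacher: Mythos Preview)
Your proof is correct and follows exactly the paper's approach: apply $(W1)$ with $\lambda=\tfrac12$, square, and use the convexity of $t\mapsto t^2$, with the second inequality being immediate. The paper's argument is the one-line chain of inequalities you anticipated.
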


\begin{proof}
Using $(W1)$, we get that
$$d^2\left(\frac{x+y}2,a\right) \leq \left(\frac{d(x,a)+d(y,a)}2\right)^2\leq \frac{d^2(x,a)+d^2(y,a)}2,$$
the last inequality being immediate.
\end{proof}

As per \cite{Koh05,Leu07}, a particular nonlinear class of $W$-hyperbolic spaces is the one of CAT(0) spaces, introduced by A. Aleksandrov \cite{Ale51} and named as such by M. Gromov \cite{Gro87}, which may be defined as those geodesic spaces $(X,d)$ such that, for any geodesic $\gamma : [0,1] \to X$ and for any $z \in X$ and $t \in [0,1]$ we have that
$$d^2(z,\gamma(t)) \leq (1-t)d^2(z,\gamma(0)) + td^2(z,\gamma(1)) - t(1-t)d^2(\gamma(0),\gamma(1)).$$
Another well-known fact about CAT(0) spaces is that each such space $(X,d)$ is {\it uniquely geodesic} -- that is, for any $x$, $y \in X$ there is a unique geodesic $\gamma : [0,1] \to X$ that joins them -- this property giving their unique $W$-hyperbolic space structure. By the discussion in \cite[pp. 387--388]{Koh08}, CAT(0) spaces may even be defined as those $W$-hyperbolic spaces $(X,d,W)$ such that, for any $a$, $x$, $y \in X$,
\begin{equation}\label{cat0}
d^2\left(\frac{x+y}2,a\right) \leq \frac12 d^2(x,a) + \frac12 d^2(y,a) - \frac14 d^2(x,y).
\end{equation}

In particular, any inner product space is a CAT(0) space, and, indeed, as experience shows, CAT(0) spaces may be regarded as the rightful nonlinear generalization of inner product spaces.

In 2008, Berg and Nikolaev proved (see \cite[Proposition 14]{BerNik08}) that in any metric space $(X,d)$, the function $\langle\cdot,\cdot\rangle : X^2 \times X^2 \to \mathbb{R}$, defined, for any $x$, $y$, $u$, $v \in X$, denoting an ordered pair of points $(a,b) \in X^2$ by $\vv{ab}$, by
$$\langle \vv{xy}, \vv{uv} \rangle := \frac12(d^2(x,v) + d^2(y,u) - d^2(x,u) -d^2(y,v)),$$
called the {\it quasi-linearization function}, is the unique one such that, for any $x$, $y$, $u$, $v$, $w \in X$, we have that:
\begin{enumerate}[(i)]
\item $\langle\vv{xy},\vv{xy}\rangle = d^2(x,y)$;
\item $\langle\vv{xy},\vv{uv}\rangle = \langle\vv{uv},\vv{xy}\rangle$;
\item $\langle\vv{yx},\vv{uv}\rangle = -\langle\vv{xy},\vv{uv}\rangle$;
\item $\langle\vv{xy},\vv{uv}\rangle + \langle\vv{xy},\vv{vw}\rangle = \langle\vv{xy},\vv{uw}\rangle$.
\end{enumerate}
The inner product notation is justified by the fact that if $X$ is a (real) inner product space, for any $x$, $y$, $u$, $v \in X$,
$$\langle\vv{xy},\vv{uv}\rangle = \langle x-y,u-v \rangle.$$
The main result of \cite{BerNik08}, namely Theorem 1 of said paper, characterized CAT(0) spaces as being exactly those geodesic spaces $(X,d)$ such that the metric generalization of Cauchy-Schwarz inequality is satisfied, i.e.\ for any $x$, $y$, $u$, $v \in X$,
$$\langle\vv{xy},\vv{uv}\rangle \leq d(x,y)d(u,v).$$

A representative example of a (complete) CAT(0) space is the {\it (hyperbolic) Poincar\'e upper half-plane model} (see also \cite{BriHae99}), having as the underlying set
$$\Hh:= \{(x_1,x_2) \in \R^2 \mid x_2 > 0\},$$
where, given the function $\arcosh : [1,\infty) \to [0,\infty)$, where for every $t \in [1,\infty)$, $\arcosh t = \ln (t+\sqrt{t^2-1})$, the distance function is defined as follows: for any $x=(x_1,x_2)$, $y=(y_1,y_2) \in \Hh$, one sets
\begin{equation}\label{dist-h}
d(x,y):= \arcosh \left( 1 + \frac{(y_1-x_1)^2 + (y_2-x_2)^2}{2x_2y_2} \right).
\end{equation}
It may be proven that geodesic lines of this space are of two types: for every $a\in \R$ and $r > 0$, one has the semicircle
$$\mathcal{C}_{a,r} = \{(x_1,x_2) \in \Hh \mid (x_1-a)^2+x_2^2=r^2\},$$
while, for every $a \in \R$, one has the ray
$$\mathcal{R}_a = \{(x_1,x_2) \in \Hh \mid x_1=a\}.$$
It may be then easily shown that for every two points there is exactly one geodesic segment that joins them. The general formula for the convex combination of two points is somewhat involved and we shall omit it, instead giving only the specialized formula for the midpoint, which shall be used later. For any $x=(x_1,x_2)$, $y=(y_1,y_2) \in \Hh$, we have that
\begin{equation}\label{mid-h}
W\left(x,y,\frac12\right) = \left(\frac{x_1y_2+x_2y_1}{x_2+y_2},\frac{\sqrt{x_2y_2}\cdot\sqrt{(x_2+y_2)^2+(x_1-y_1)^2}}{x_2+y_2}\right).
\end{equation}

We will now explore the class of uniformly smooth Banach spaces, in order to better understand the way it was generalized in \cite{Pin24} to uniformly smooth hyperbolic spaces.

\begin{definition}
Let $X$ be a Banach space. We define the {\bf normalized duality mapping of $X$} to be the map $J :X\to 2^{X^*}$, defined, for all $x \in X$, by
$$ J(x) := \{x^*\in X^* \mid  x^*(x)=\|x\|^2,\ \|x^*\|=\|x\| \}.$$
\end{definition}

A Banach space $X$ is called {\it smooth} if, for any $x \in X$ with $\|x\|=1$, we have that, for any $y \in X$ with $\|y\|=1$, the limit
\begin{equation}\label{limh}
\lim_{h \to 0} \frac{\|x+hy\|-\|x\|}{h}
\end{equation}
exists, and {\it uniformly smooth} if the limit is attained uniformly in $x$ and $y$. The condition of (not necessarily uniform) smoothness has been proven to be equivalent to the fact that the normalized duality mapping of the space, $J: X \to 2^{X^*}$, is single-valued -- and we shall denote its unique section by $j: X \to X^*$. Therefore, for all $x \in E$, $j(x)(x)=\|x\|^2$ and $\|j(x)\|=\|x\|$. Hilbert spaces are smooth, and clearly $j(x)(y)$ is then simply $\langle y,x \rangle$, for any $x,y$ in the space. Because of this, we may consider the $j$ to be a generalized variant of the inner product, sharing some of its nice properties, and, thus, one generally denotes, for all Banach spaces $X$, all $x^* \in X^*$ and $y \in X$, $x^*(y)$ by $\langle y,x^* \rangle$. In addition, the homogeneity of $j$ -- i.e.\ that, for all $x \in X$ and $t \in \R$, $j(tx)=tj(x)$ -- follows immediately from the definition of the duality mapping.

This view of $j$ as a generalization of the inner product, together with the way the quasi-linearization function was seen to be defined above, led the first author \cite{Pin24} to the following definition.

\begin{definition}[{\cite[Section 3]{Pin24}}]
A {\bf smooth hyperbolic space} is a quadruple $(X,d,W,\pi)$, where $(X,d,W)$ is a $W$-hyperbolic space and $\pi : X^2 \times X^2 \to \mathbb{R}$, such that, for any $x$, $y$, $u$, $v \in X$ (where, again, an ordered pair of points $(a,b) \in X^2$ is denoted by $\vv{ab}$):
\begin{enumerate}[(P1)]
\item $\piv{xy}{xy}=d^2(x,y)$;
\item $\piv{xy}{uv} = -\piv{yx}{uv} = -\piv{xy}{vu}$;
\item $\piv{xy}{uv} + \piv{yz}{uv} = \piv{xz}{uv}$;
\item $\piv{xy}{uv} \leq d(x,y)d(u,v)$;
\item $d^2(W(x,y,\lambda),z) \leq (1-\lambda)^2 d^2(x,z) + 2\lambda \piv{yz}{W(x,y,\lambda)z}$.
\end{enumerate}
\end{definition}

It is a classical result that, in uniformly smooth Banach spaces, the mapping $j$ is norm-to-norm uniformly continuous on bounded subsets. In the PhD thesis of B\'enilan \cite[p.\ 0.5, Proposition 0.3]{Ben72}, it is shown that the norm-to-norm uniform continuity on bounded subsets of an arbitrary duality selection mapping is in fact equivalent to uniform smoothness. A more recent proof which uses ideas due to Giles \cite{Gil67} may be found in \cite[Appendix A]{Kor15}. Motivated by this fact, the first author defined a {\it uniformly smooth hyperbolic space} to be a smooth hyperbolic space $(X,d,W,\pi)$ such that there is an $\omega : (0,\infty) \times (0,\infty) \to (0,\infty)$, called a {\it modulus of uniform continuity for $\pi$}, having the property that, for any $r$, $\eps>0$ and $a$, $u$, $v$, $x$, $y \in X$ with $d(u,a) \leq r$, $d(v,a) \leq r$ and $d(u,v) \leq \omega(r,\eps)$, one has that
$$| \piv{xy}{ua} - \piv{xy}{va}| \leq \eps \cdot d(x,y).$$

As mentioned in the Introduction, the remaining class of Banach spaces whose nonlinear generalization is relevant to our purposes is that of uniformly convex Banach spaces. We shall now turn to exploring uniform convexity directly in the hyperbolic setting, as first introduced by Leu\c stean \cite{Leu07,Leu10} (motivated by \cite[p. 105]{GoeRei84}; the monotonicity condition was justified by proof-theoretical considerations).

\begin{definition}
If $(X,d,W)$ is a $W$-hyperbolic space, then a {\bf modulus of uniform convexity} for $(X,d,W)$ is a function $\eta :(0, \infty) \times (0,2] \to (0,1]$ such that, for any $r >0$, any $\eps \in (0,2]$ and any $a$, $x$, $y \in X$ with $d(x,a) \leq r$, $d(y,a) \leq r$, $d(x,y) \geq \eps r$, we have that
$$d\left(\frac{x+y}2,a\right) \leq (1-\eta(r,\eps))r.$$
We call the modulus {\bf monotone} if, for any $r$, $s >0$ with $s \leq r$ and any $\eps \in (0,2]$, we have that $\eta(r,\eps) \leq \eta(s,\eps)$.

A {\bf $UCW$-hyperbolic space} is a $W$-hyperbolic space that admits a monotone modulus of uniform convexity.
\end{definition}

As remarked in \cite[Proposition 2.6]{Leu07}, CAT(0) spaces are $UCW$-hyperbolic spaces having as a modulus of uniform convexity the simple function $(r,\eps) \mapsto \eps^2/8$.

The crucial property of uniform convexity that was used in \cite{KohSip21} was the following, which we now reify for the first time.

\begin{definition}
Let $\psi :(0, \infty) \times (0,\infty) \to (0,\infty)$. We say that a $W$-hyperbolic space $(X,d,W)$ {\bf has property $(G)$ with modulus $\psi$} if, for any $r$, $\eps >0$ and any $a$, $x$, $y \in X$ with $d(x,a) \leq r$, $d(y,a) \leq r$, $d(x,y) \geq \eps $, we have that
$$d^2\left(\frac{x+y}2,a\right) \leq \frac12 d^2(x,a) + \frac12 d^2(y,a) - \psi(r,\eps).$$
\end{definition}

By \cite[Proposition 2.4]{KohSip21} (among others, see the references given there; this result essentially goes back to Z\u alinescu \cite[Section 4]{Zal83}), uniformly convex Banach spaces have property $(G)$ with a modulus which is easily computable in terms of the modulus of uniform convexity.

\begin{proposition}
CAT(0) spaces have property $(G)$.
\end{proposition}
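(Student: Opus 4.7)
The plan is to read off property $(G)$ directly from the CAT(0) inequality (\ref{cat0}) already recalled in Section~\ref{sec:ns}. The inequality states that, in a CAT(0) space $(X,d)$ (equipped with its canonical $W$), for all $a$, $x$, $y \in X$,
$$d^2\!\left(\frac{x+y}{2},a\right) \leq \frac12 d^2(x,a) + \frac12 d^2(y,a) - \frac14 d^2(x,y).$$
This bound is essentially already in the shape of property $(G)$, the only discrepancy being that the `slack' appears as $\frac14 d^2(x,y)$ rather than as a function of the pair $(r,\eps)$.

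To close the gap, I would simply observe that if $d(x,y)\geq \eps$ then $\frac14 d^2(x,y)\geq \frac{\eps^2}{4}$, so that the CAT(0) inequality implies
$$d^2\!\left(\frac{x+y}{2},a\right) \leq \frac12 d^2(x,a) + \frac12 d^2(y,a) - \frac{\eps^2}{4},$$
whenever $d(x,a)\leq r$, $d(y,a)\leq r$ and $d(x,y)\geq \eps$. Hence $\psi(r,\eps):= \eps^2/4$, a function which does not even depend on $r$, serves as a modulus witnessing property $(G)$.

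Since the argument is a one-line substitution, there is no genuine obstacle; the only thing worth flagging is that, in contrast to the uniformly convex Banach space case (where the modulus must be extracted from the modulus of uniform convexity via the cited result of Z\u alinescu), in the CAT(0) setting the modulus is available in closed form, with the additional feature of being independent of the radius $r$. This parallels the well-known fact, recalled just before the statement, that CAT(0) spaces are $UCW$-hyperbolic with the $r$-independent modulus $\eta(r,\eps)=\eps^2/8$.
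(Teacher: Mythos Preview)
Your proposal is correct and matches the paper's proof essentially line for line: the paper also applies inequality~\eqref{cat0} directly, uses $d(x,y)\geq\eps$ to bound the last term below by $\eps^2/4$, and thereby obtains the modulus $\psi(r,\eps)=\eps^2/4$.
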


\begin{proof}
Let $X$ be a CAT(0) space and $r$, $\eps >0$, $a$, $x$, $y \in X$ with $d(x,a) \leq r$, $d(y,a) \leq r$, $d(x,y) \geq \eps $.
By \eqref{cat0}, we have that
$$d^2\left(\frac{x+y}2,a\right) \leq \frac12 d^2(x,a) + \frac12 d^2(y,a) - \frac14 d^2(x,y) \leq \frac12 d^2(x,a) + \frac12 d^2(y,a) -\frac{\eps^2}4,$$
and we are done.
\end{proof}

We do not know (and we leave it as an open problem) whether, generally, $UCW$-hyperbolic spaces have property $(G)$; however, the first author identified in \cite[Proposition 5.4]{Pin24} (see also \cite[Proposition 2.5]{Sip21}) the following weaker property of them (which, again, we now reify for the first time) as being enough for the proof to go through.

\begin{definition}
Let $\psi :(0, \infty) \times (0,\infty) \to (0,\infty)$. We say that a $W$-hyperbolic space $(X,d,W)$ {\bf has property $(M)$ with modulus $\psi$} if, for any $r$, $\eps >0$ and any $a$, $x$, $y \in X$ with $d(x,a) \leq r$, $d(y,a) \leq r$, $d(x,y) \geq \eps $, we have that
$$d^2\left(\frac{x+y}2,a\right) \leq \max(d^2(x,a) ,d^2(y,a)) - \psi(r,\eps).$$
\end{definition}

Thus, the main results of \cite{Pin24} may be expressed by saying that Reich's theorem and its consequences hold for complete uniformly smooth hyperbolic spaces having property $(M)$, and, in particular, either having property $(G)$ or being $UCW$-hyperbolic.

\section{Main results}\label{sec:main}

Let $n \in \N \setminus \{0\}$ and fix $n$ metric spaces $(X_1,d_1),\ldots,(X_n,d_n)$. Put $X:=\prod_{i=1}^n X_i$ and define $d : X \times X \to \R$, by putting, for any $x=(x_1,\ldots,x_n)$, $y=(y_1,\ldots,y_n) \in X$,
$$d(x,y):=\left(\sum_{i=1}^n d_i^2(x_i,y_i) \right)^\frac12.$$
It is a classical result that $(X,d)$ is also a metric space (and it is complete if all of the factors are).

Now fix $W_1,\ldots,W_n$ such that, for each $i$, $(X_i,d_i,W_i)$ is a $W$-hyperbolic space. Define $W: X^2 \times [0,1] \to X$ by putting, for any $x=(x_1,\ldots,x_n)$, $y=(y_1,\ldots,y_n) \in X$ and $\lambda \in [0,1]$, and for any $i$,
$$W(x,y,\lambda)_i := W_i(x_i,y_i,\lambda).$$

\begin{proposition}
$(X,d,W)$ is a $W$-hyperbolic space.
\end{proposition}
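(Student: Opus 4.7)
The plan is to verify each of the four axioms $(W1)$--$(W4)$ coordinatewise and then combine the coordinate estimates through the $\ell^2$-structure of $d$. Axioms $(W2)$ and $(W3)$ will be essentially immediate, while $(W1)$ and $(W4)$ will require one genuine input beyond the assumptions on the factors, namely Minkowski's inequality for the Euclidean norm on $\R^n$.

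For $(W3)$, the definition of $W$ gives $W(x,y,\lambda)_i = W_i(x_i,y_i,\lambda) = W_i(y_i,x_i,1-\lambda) = W(y,x,1-\lambda)_i$ for every $i$, which is the desired identity. For $(W2)$, applying $(W2)$ in each $X_i$ gives $d_i(W_i(x_i,y_i,\lambda), W_i(x_i,y_i,\mu))^2 = (\lambda-\mu)^2 d_i(x_i,y_i)^2$, so summing over $i$ and using the definition of $d$ yields
\[
d(W(x,y,\lambda), W(x,y,\mu))^2 = (\lambda-\mu)^2 \sum_{i=1}^n d_i(x_i,y_i)^2 = (\lambda-\mu)^2\, d(x,y)^2,
\]
and taking the square root produces the required equality.

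For $(W1)$, one first applies $(W1)$ in each $X_i$ to obtain
\[
d_i(z_i, W_i(x_i,y_i,\lambda)) \leq (1-\lambda)\, d_i(z_i,x_i) + \lambda\, d_i(z_i,y_i).
\]
Setting $a_i := d_i(z_i,x_i)$ and $b_i := d_i(z_i,y_i)$, squaring, and summing gives
\[
d(z,W(x,y,\lambda))^2 \leq \sum_{i=1}^n\bigl((1-\lambda) a_i + \lambda b_i\bigr)^2 = \bigl\|(1-\lambda)(a_1,\ldots,a_n) + \lambda (b_1,\ldots,b_n)\bigr\|^2.
\]
The triangle inequality for the Euclidean norm then bounds the right-hand side by $\bigl((1-\lambda)\|(a_i)\| + \lambda\|(b_i)\|\bigr)^2 = \bigl((1-\lambda)d(z,x) + \lambda d(z,y)\bigr)^2$, and taking square roots yields $(W1)$. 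Axiom $(W4)$ is handled in exactly the same way: apply $(W4)$ in each factor to get a coordinatewise bound of the form $(1-\lambda)d_i(x_i,y_i) + \lambda d_i(z_i,w_i)$, square, sum, and appeal once more to Minkowski.

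The only conceptual point to watch is precisely that the $\ell^2$-combination of coordinate inequalities of the shape $(1-\lambda) a_i + \lambda b_i$ does not degrade: Minkowski gives the sharp bound $(1-\lambda)\|a\| + \lambda\|b\|$ that the axioms demand. This is why the $\ell^2$-metric is the natural choice for the product, and it is the only step that is not purely formal.
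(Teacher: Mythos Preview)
Your proof is correct and follows essentially the same approach as the paper: verify each axiom coordinatewise, note that $(W2)$ and $(W3)$ are immediate from the factor properties, and for $(W1)$ and $(W4)$ combine the coordinate bounds via the triangle inequality (Minkowski) for the Euclidean norm on $\R^n$. The only cosmetic difference is that the paper writes out the chains of equalities and inequalities in full rather than introducing the auxiliary vectors $(a_i)$, $(b_i)$.
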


\begin{proof}
We prove $(W1)$. Let $x=(x_1,\ldots,x_n)$, $y=(y_1,\ldots,y_n)$, $z=(z_1,\ldots,z_n) \in X$ and $\lambda \in [0,1]$. We have that
\begin{align*}
d(W(x,y,\lambda),z) &= \left(\sum_{i=1}^n d_i^2(W(x,y,\lambda)_i,z_i) \right)^\frac12 \\
&= \left(\sum_{i=1}^n d_i^2(W_i(x_i,y_i,\lambda),z_i) \right)^\frac12 \\
&\leq \left(\sum_{i=1}^n ((1-\lambda)d_i(x_i,z_i) + \lambda d_i(y_i,z_i))^2 \right)^\frac12 \\
&=\|((1-\lambda)d_1(x_1,z_1) + \lambda d_1(y_1,z_1),\ldots,(1-\lambda)d_n(x_n,z_n) + \lambda d_n(y_n,z_n))\| \\
&=\|(1-\lambda)(d_1(x_1,z_1),\ldots,d_n(x_n,z_n)) + \lambda(d_1(y_1,z_1),\ldots,d_n(y_n,z_n))\|\\
&\leq (1-\lambda)\|(d_1(x_1,z_1),\ldots,d_n(x_n,z_n))\| + \lambda\|(d_1(y_1,z_1),\ldots,d_n(y_n,z_n))\| \\
&= (1-\lambda)\left(\sum_{i=1}^n d_i^2(x_i,z_i) \right)^\frac12 + \lambda\left(\sum_{i=1}^n d_i^2(y_i,z_i) \right)^\frac12 \\
&= (1-\lambda)d(x,z) + \lambda d(y,z).
\end{align*}

We prove $(W2)$. Let $x=(x_1,\ldots,x_n)$, $y=(y_1,\ldots,y_n) \in X$ and $\lambda$, $\lambda' \in [0,1]$. We have that
\begin{align*}
d(W(x,y,\lambda),W(x,y,\lambda')) &=  \left(\sum_{i=1}^n d_i^2(W(x,y,\lambda)_i,W(x,y,\lambda')_i) \right)^\frac12 \\
&= \left(\sum_{i=1}^n d_i^2(W_i(x_i,y_i,\lambda),W_i(x_i,y_i,\lambda')) \right)^\frac12 \\
&= \left(\sum_{i=1}^n |\lambda-\lambda'|^2d_i^2(x_i,y_i) \right)^\frac12 \\
&= |\lambda-\lambda'|\left(\sum_{i=1}^n d_i^2(x_i,y_i) \right)^\frac12 \\
&= |\lambda-\lambda'|d(x,y).
\end{align*}

$(W3)$ is immediate. We now prove $(W4)$. Let $x=(x_1,\ldots,x_n)$, $y=(y_1,\ldots,y_n)$, $z=(z_1,\ldots,z_n)$, $w=(w_1,\ldots,w_n) \in X$ and $\lambda \in [0,1]$. We have that
\begin{align*}
d(W(x,y,\lambda),W(z,w,\lambda)) &= \left(\sum_{i=1}^n d_i^2(W(x,y,\lambda)_i,W(z,q,\lambda)_i) \right)^\frac12 \\
&= \left(\sum_{i=1}^n d_i^2(W_i(x_i,y_i,\lambda),W_i(z_i,w_i,\lambda)) \right)^\frac12 \\
&\leq \left(\sum_{i=1}^n ((1-\lambda)d_i(x_i,z_i) + \lambda d_i(y_i,w_i))^2 \right)^\frac12 \\
&=\|((1-\lambda)d_1(x_1,z_1) + \lambda d_1(y_1,w_1),\ldots,(1-\lambda)d_n(x_n,z_n) + \lambda d_n(y_n,w_n))\| \\
&=\|(1-\lambda)(d_1(x_1,z_1),\ldots,d_n(x_n,z_n)) + \lambda(d_1(y_1,w_1),\ldots,d_n(y_n,w_n))\|\\
&\leq (1-\lambda)\|(d_1(x_1,z_1),\ldots,d_n(x_n,z_n))\| + \lambda\|(d_1(y_1,w_1),\ldots,d_n(y_n,w_n))\| \\
&= (1-\lambda)\left(\sum_{i=1}^n d_i^2(x_i,z_i) \right)^\frac12 + \lambda\left(\sum_{i=1}^n d_i^2(y_i,w_i) \right)^\frac12 \\
&= (1-\lambda)d(x,z) + \lambda d(y,w).
\end{align*}
\end{proof}

\begin{proposition}
Assume that, for each $i$, $(X_i,d_i,W_i)$ is a CAT(0) space. Then $(X,d,W)$ is a CAT(0) space.
\end{proposition}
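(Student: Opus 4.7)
The plan is to use the midpoint characterization of CAT(0) spaces among $W$-hyperbolic spaces, namely inequality \eqref{cat0}, which the excerpt has already recalled: a $W$-hyperbolic space $(Y,d,W)$ is CAT(0) if and only if for all $a,x,y \in Y$,
$$d^2\!\left(\tfrac{x+y}{2},a\right) \leq \tfrac12 d^2(x,a) + \tfrac12 d^2(y,a) - \tfrac14 d^2(x,y).$$
Since we have already established in the previous proposition that $(X,d,W)$ is a $W$-hyperbolic space, it suffices to verify this one inequality in the product.

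The key observation is that the product metric $d$ is defined coordinate-wise in an $\ell^2$ fashion and the product convexity map $W$ is defined coordinate-wise as well, so that for any $x=(x_1,\dots,x_n)$, $y=(y_1,\dots,y_n)$, $a=(a_1,\dots,a_n) \in X$,
$$d^2\!\left(\tfrac{x+y}{2},a\right) = \sum_{i=1}^n d_i^2\!\left(\tfrac{x_i+y_i}{2},a_i\right),$$
and similarly $d^2(x,a) = \sum_i d_i^2(x_i,a_i)$, $d^2(y,a) = \sum_i d_i^2(y_i,a_i)$, and $d^2(x,y) = \sum_i d_i^2(x_i,y_i)$.

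Now I apply \eqref{cat0} in each factor $X_i$ (which is CAT(0) by hypothesis) and sum the $n$ resulting inequalities. The right-hand side bound
$$\tfrac12 d_i^2(x_i,a_i) + \tfrac12 d_i^2(y_i,a_i) - \tfrac14 d_i^2(x_i,y_i)$$
is linear in these squared distances, so summation over $i$ yields exactly the desired inequality for $(X,d,W)$. Hence $(X,d,W)$ satisfies the CAT(0) midpoint inequality and is therefore a CAT(0) space. There is no real obstacle here; the proof is an immediate coordinate-wise application of \eqref{cat0} enabled by the fact that our product metric squares additively across coordinates and our product convexity map is coordinate-wise.
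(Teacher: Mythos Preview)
Your proof is correct and follows essentially the same approach as the paper: apply the midpoint CAT(0) inequality \eqref{cat0} in each factor and sum, using that both the squared product metric and the product convexity structure decompose additively over coordinates.
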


\begin{proof}
We shall use \eqref{cat0}. Let $a=(a_1,\ldots,a_n)$, $x=(x_1,\ldots,x_n)$, $y=(y_1,\ldots,y_n) \in X$. We have that
\begin{align*}
d^2\left(\frac{x+y}2,a\right) &= \sum_{i=1}^n d_i^2\left(\frac{x_i+y_i}2,a_i\right) \\
&\leq \sum_{i=1}^n \left(\frac12 d_i^2(x_i,a_i) + \frac12 d_i^2(y_i,a_i) - \frac14 d_i^2(x_i,y_i)\right)\\
&= \frac12 d^2(x,a) + \frac12 d^2(y,a) - \frac14 d^2(x,y),
\end{align*}
and we are done.
\end{proof}

The proof of the following proposition uses ideas\footnote{In addition, the proof benefited much from the quite perspicuous presentation in Ulrich Kohlenbach's private notes, which he shared with us and which included the computation of the modulus in the linear setting, a particular case of which being mentioned in the Introduction of the paper \cite{FreKoh23}.} from \cite[Theorem 1]{Cla36}.

\begin{proposition}\label{ucwpres}
Assume that, for each $i$, $(X_i,d_i,W_i)$ is a $UCW$-hyperbolic space. Then $(X,d,W)$ is a $UCW$-hyperbolic space.
\end{proposition}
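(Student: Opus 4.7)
The plan is to build a monotone modulus of uniform convexity for the product by combining two different upper bounds on $\sum_i m_i^2$, closed by a case split on the size of $\|A-B\|$ (where $A,B$ are the tuples of coordinate distances from $a$). First I would reduce to a common modulus: take $\eta_0(r,\eps):=\min_i\eta_i(r,\eps)$ where $\eta_i$ is a monotone modulus for $X_i$; this $\eta_0$ is monotone (nonincreasing in $r$) and valid for every $X_i$, and by replacing it with $\sup_{\eps'\le\eps}\eta_0(r,\eps')$ one may further assume $\eta_0$ is also nondecreasing in $\eps$. Fix $r>0$, $\eps\in(0,2]$, $a,x,y\in X$ with $d(x,a),d(y,a)\le r$, $d(x,y)\ge\eps r$, and let $\alpha_i,\beta_i,\gamma_i,m_i$ denote the obvious coordinate distances, assembled into tuples $A,B,G\in\R^n$, so $\|A\|,\|B\|\le r$, $\|G\|\ge\eps r$ and $d^2\bigl(\frac{x+y}2,a\bigr)=\sum_i m_i^2$.

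Next I would derive two upper bounds. The first comes from $(W1)$: $m_i\le(\alpha_i+\beta_i)/2$ combined with the $\R^n$-parallelogram identity gives $\sum_i m_i^2\le\|A+B\|^2/4\le r^2-\|A-B\|^2/4$. The second comes from coordinate uniform convexity: for each $i$ with $r_i:=\max(\alpha_i,\beta_i)>0$, UCW for $X_i$ yields $m_i\le(1-\eta_0(r_i,\gamma_i/r_i))\,r_i$, and squaring, using $(1-x)^2\le 1-x$ for $x\in[0,1]$, the monotonicity of $\eta_0$, and the inequality $r_i^2\ge\gamma_i^2/4$ (since $\gamma_i\le 2r_i$), yields a property-$(M)$-type estimate $m_i^2\le\max(\alpha_i^2,\beta_i^2)-\eta_0(r,\gamma_i/r)\,\gamma_i^2/4$. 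Setting $\tau:=\eps r/(2\sqrt{n})$ and $I_+:=\{i:\gamma_i\ge\tau\}$, pigeonhole gives $\sum_{I_+}\gamma_i^2\ge\eps^2r^2/2$; applying the second estimate on $I_+$ and Proposition \ref{quad} on its complement, expanding $\max(\alpha^2,\beta^2)=\tfrac12(\alpha^2+\beta^2)+\tfrac12|\alpha^2-\beta^2|$, bounding $\sum_i|\alpha_i^2-\beta_i^2|\le\|A-B\|\cdot\|A+B\|\le 2r\|A-B\|$ by Cauchy--Schwarz, and invoking monotonicity of $\eta_0$ in $\eps$, I arrive at the hybrid estimate $\sum_i m_i^2\le r^2+r\|A-B\|-Kr^2$ with $K:=\eta_0(r,\eps/(2\sqrt{n}))\,\eps^2/8$.

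A case split on $\|A-B\|$ then finishes. If $\|A-B\|\le Kr/2$, the hybrid bound gives $\sum_i m_i^2\le r^2(1-K/2)$; otherwise the first bound gives $\sum_i m_i^2\le r^2(1-K^2/16)$. Since $K\in(0,1]$, both cases yield $d\bigl(\frac{x+y}2,a\bigr)\le(1-\eta(r,\eps))r$ with, say, $\eta(r,\eps):=K^2/32$, which inherits monotonicity from $\eta_0$, establishing that $(X,d,W)$ is a $UCW$-hyperbolic space. The main obstacle is that the naive UCW bound $m_i^2\le(1-\eta_0)^2r_i^2$ is \emph{not} uniformly stronger than the Proposition \ref{quad} bound $m_i^2\le(\alpha_i^2+\beta_i^2)/2$: the two differ by a $\tfrac12|\alpha_i^2-\beta_i^2|$-type term which can have either sign against the UCW correction, so one cannot simply sum the UCW bounds coordinatewise; the Cauchy--Schwarz control of this discrepancy and its absorption into the $(W1)$-quadratic regime via the $\|A-B\|$ split is the crux of the argument.
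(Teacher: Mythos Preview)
Your argument is correct. Both your proof and the paper's share the same two-regime skeleton --- a parallelogram-identity bound in $\R^n$ for the ``large $\|A-B\|$'' regime, and a coordinatewise uniform-convexity bound for the opposite regime --- but they organize the dichotomy differently. The paper fixes a threshold $\delta$ and splits on whether some single coordinate satisfies $\alpha_j\ge\beta_j+\delta$; its Case~II then exploits this asymmetric $\delta$-shift to replace each $\max(\alpha_i^2,\beta_i^2)$ by $(\beta_i+\delta)^2$ and closes with several algebraic substitutions. You instead split directly on the global quantity $\|A-B\|$, derive a clean coordinate property-$(M)$ estimate $m_i^2\le\max(\alpha_i^2,\beta_i^2)-\eta_0(r,\gamma_i/r)\,\gamma_i^2/4$ from UCW, and then control the troublesome $\sum_i|\alpha_i^2-\beta_i^2|$ term by Cauchy--Schwarz, which is the genuinely new technical move. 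Your route is somewhat more conceptual and avoids the paper's ad hoc choice of $\delta$ and the running $A,B,C,D$ substitutions; the price is that you need $\eta_0$ to be nondecreasing in $\eps$, which you correctly obtain by passing to the sup-envelope $\sup_{\eps'\le\eps}\eta_0(r,\eps')$ (this preserves both the modulus property and monotonicity in $r$). The resulting explicit moduli differ in their constants but are of comparable quality.
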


\begin{proof}
For each $i$, let $\eta_i$ be a monotone modulus of uniform convexity for $(X_i,d_i,W_i)$. Define $\eta$, $\check{\eta} :(0, \infty) \times (0, 2] \to (0,\infty)$ by putting, for any $r >0$ and any $\eps \in (0,2]$,
$$\eta(r,\eps):= \min \left(\left\{ \eta_i(r,\eps) \mid i \in \{1,\ldots,n\}\right\} \cup \left\{\frac12\right\}\right)$$
and
$$\check{\eta}(r,\eps) := \min \left( \frac{\eps^4}{4608n^4} \cdot \eta^2\left(r,\frac\eps{\sqrt{n}}\right), \frac{\eps^2}{16n} \cdot \eta\left(r,\frac\eps{\sqrt{n}}\right) \right).$$
Clearly, $\eta$ and $\check{\eta}$ are monotone and we have that, for any $r>0$ and any $\eps \in (0,2]$, $\eta(r,\eps) \leq 1/2$ and, for any $i\in \{1,\ldots,n\}$, $\eta(r,\eps) \leq \eta_i(r,\eps)$. We will show that $\check{\eta}$ is a modulus of uniform convexity for $(X,d,W)$.

Let $r>0$, $\eps \in (0,2]$ and $a=(a_1,\ldots,a_n)$, $x=(x_1,\ldots,x_n)$, $y=(y_1,\ldots,y_n) \in X$ with $d(x,a) \leq r$, $d(y,a) \leq r$, $d(x,y) \geq \eps r$. It is immediate that, for each $i \in \{1,\ldots,n\}$, $d_i(x_i,a_i) \leq r$ and $d_i(y_i,a_i) \leq r$.

Set
$$\delta := \frac{\eps^2r}{24n^2}\eta\left(r,\frac{\eps}{\sqrt{n}}\right) \leq \frac{4r}{24n^2} \cdot \frac12 \leq r.$$
We will distinguish two cases.

{\bf Case I.} There is a $j \in \{1,\ldots, n\}$ such that $d_j(x_j,a_j) \geq d_j(y_j,a_j) + \delta$.

Note that, in this case, $\delta \leq d_j(y_j,a_j) + \delta \leq d_j(x_j,a_j) \leq r$, so $\delta^2/8r \leq r$.

Put $u:=(d_1(x_1,a_1),\ldots,d_n(x_n,a_n))$ and $v:=(d_1(y_1,a_1),\ldots,d_n(y_n,a_n))$, so $\|u\| = d(x,a) \leq r$ and $\|v\| = d(y,a) \leq r$. We have that
$$\|u-v\|^2 = \sum_{i=1}^n (d_i(x_i,a_i) - d_i(y_i,a_i))^2 \geq (d_j(x_j,a_j)-d_j(y_j,a_j))^2 \geq \delta^2,$$
so 
\begin{align*}
d^2\left(\frac{x+y}2,a\right) &= \sum_{i=1}^n d^2\left(\frac{x_i+y_i}2,a_i \right) \\
&\leq \sum_{i=1}^n \left(\frac{d(x_i,a_i)+d(y_i,a_i)}2\right)^2 \\
&= \left\|\frac{u+v}2\right\|^2 \\
&= \frac12 \|u\|^2 + \frac12 \|v\|^2 - \frac14 \|u-v\|^2 \\
&\leq r^2 - \frac14\delta^2 \leq r^2 - \frac14\delta^2 + \frac{\delta^4}{64r^2} = \left(r-\frac{\delta^2}{8r}\right)^2,
\end{align*}
so
\begin{align*}
d\left(\frac{x+y}2,a\right) &\leq \left|r-\frac{\delta^2}{8r}\right| = r-\frac{\delta^2}{8r} = \left(1-\frac{\delta^2}{8r^2}\right)r \\
&= \left(1 - \frac{\eps^4}{4608 n^4} \eta^2\left(r,\frac{\eps}{\sqrt{n}}\right)\right) r \leq (1-\check{\eta}(r,\eps))r,
\end{align*}
and we are done in this case.

{\bf Case II.} For any $i \in \{1,\ldots, n\}$, $d_i(x_i,a_i) \leq d_i(y_i,a_i) + \delta$.

Assume towards a contradiction that, for every $i \in \{1,\ldots,n\}$, $d_i(x_i,y_i) < \eps r/\sqrt{n}$. Then
$$\eps^2r^2 \leq d^2(x,y) = \sum_{i=1}^n d_i^2(x_i,y_i) < n \cdot \frac{\eps^2r^2}n = \eps^2r^2,$$
a contradiction. Thus, there is a $j \in \{1,\ldots,n\}$ such that $ d_j(x_j,y_j) \geq \eps r/\sqrt{n}$.

Set $K:= \max(d_j(x_j,a_j),d_j(y_j,a_j)) \leq r$, so
$$\frac{\eps r}{2\sqrt{n}} \leq K \leq d_j(y_j,a_j) + \delta.$$
Also, we have that $d_j(x_j,a_j) \leq K$, $d_j(y_j,a_j) \leq K$ and
$$d_j(x_j,y_j) \geq \frac{\eps r}{\sqrt{n}} \geq \frac\eps{\sqrt{n}} \cdot K,$$
so
\begin{align*}
d_j\left(\frac{x_j+y_j}2,a_j\right) &\leq \left(1- \eta_j\left(K, \frac\eps{\sqrt{n}} \right) \right) \cdot K \\
&\leq \left(1- \eta_j\left(r, \frac\eps{\sqrt{n}} \right) \right) \cdot K \\
&\leq \left(1- \eta\left(r, \frac\eps{\sqrt{n}} \right) \right) \cdot K \\
&\leq d_j(y_j,a_j) + \delta - \frac{\eps r}{2\sqrt{n}} \eta\left(r, \frac\eps{\sqrt{n}} \right).
\end{align*}
Therefore,
\begin{align*}
d^2\left(\frac{x+y}2,a\right) &= d_j^2\left(\frac{x_j+y_j}2,a_j\right) + \sum_{i \neq j} d_i^2\left(\frac{x_i+y_i}2,a_i\right) \\
&\leq \left(d_j(y_j,a_j) + \delta - \frac{\eps r}{2 \sqrt{n}} \eta\left(r, \frac{\eps}{\sqrt{n}}\right)\right)^2 + \sum_{i \neq j} \left(d_i(y_i,a_i) + \frac\delta 2\right)^2.
\end{align*}
Now put
$$A:=d_j(y_j,a_j) + \delta,$$
so
$$2A \geq \frac{\eps r}{\sqrt{n}},$$
and
$$B:=\frac{\eps r}{2 \sqrt{n}} \eta\left(r, \frac{\eps}{\sqrt{n}}\right) \leq  \frac{\eps r}{4\sqrt{n}} \leq  \frac{\eps r}{2\sqrt{n}},$$
so
$$ 2A - B \geq   \frac{\eps r}{2\sqrt{n}}$$
and
$$(A-B)^2 - A^2 = -B(2A-B) \leq -B \cdot  \frac{\eps r}{2\sqrt{n}} = - \frac{\eps^2 r^2}{4n} \eta\left(r, \frac{\eps}{\sqrt{n}}\right).$$
Thus,
\begin{align*}
d^2\left(\frac{x+y}2,a\right) &\leq \sum_{i=1}^n (d_i(y_i,a_i) + \delta)^2 - \frac{\eps^2 r^2}{4n} \eta\left(r, \frac{\eps}{\sqrt{n}}\right) \\
&= \sum_{i=1}^n (d_i^2(y_i,a_i) + \delta(\delta+2d_i(y_i,a_i))) - \frac{\eps^2 r^2}{4n} \eta\left(r, \frac{\eps}{\sqrt{n}}\right) \\
&\leq \sum_{i=1}^n (d_i^2(y_i,a_i) + 3r\delta) - \frac{\eps^2 r^2}{4n} \eta\left(r, \frac{\eps}{\sqrt{n}}\right) \\
&= \sum_{i=1}^n d_i^2(y_i,a_i) + 3rn\delta - \frac{\eps^2 r^2}{4n} \eta\left(r, \frac{\eps}{\sqrt{n}}\right) \\
&= d^2(y,a)  - \frac{\eps^2 r^2}{8n} \eta\left(r, \frac{\eps}{\sqrt{n}}\right).
\end{align*}
Now put $C:=d^2(y,a) \leq r^2$, so $\sqrt{C}\leq r$, and
$$D:= \frac{\eps^2 r^2}{8n} \eta\left(r, \frac{\eps}{\sqrt{n}}\right).$$
Clearly, from the inequality from before, $C \geq D$, so
$$\sqrt{C} \geq \frac{D}{2\sqrt{C}}.$$
We have that
$$d^2\left(\frac{x+y}2,a\right) \leq C-D \leq C-D+\frac{D^2}{4C} = \left(\sqrt{C} - \frac{D}{2\sqrt{C}} \right)^2,$$
so
\begin{align*}
d\left(\frac{x+y}2,a\right) &\leq \left|\sqrt{C} - \frac{D}{2\sqrt{C}}\right| = \sqrt{C} - \frac{D}{2\sqrt{C}} \leq r - \frac{D}{2r} = r -  \frac{\eps^2 r}{16n} \eta\left(r, \frac{\eps}{\sqrt{n}}\right) \\
&= \left(1 - \frac{\eps^2 }{16n} \eta\left(r, \frac{\eps}{\sqrt{n}}\right)\right)r \leq (1-\check{\eta}(r,\eps))r,
\end{align*}
so we are done in this case, too.
\end{proof}

We remark that, if we only care about the property $(G)$, the corresponding preservation proof is considerably simpler.

\begin{proposition}
Assume that, for each $i$, $(X_i,d_i,W_i)$ has property $(G)$. Then $(X,d,W)$ has property $(G)$.
\end{proposition}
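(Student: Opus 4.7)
The plan is to reduce to the factorwise property $(G)$ via a coordinate on which the points $x$ and $y$ are sufficiently separated, and to handle the remaining coordinates with the trivial bound from Proposition~\ref{quad}.

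More concretely, for each $i$ let $\psi_i$ be a modulus witnessing property $(G)$ for $(X_i,d_i,W_i)$, and define
$$\psi(r,\eps) := \min_{i \in \{1,\ldots,n\}} \psi_i\!\left(r, \frac{\eps}{\sqrt{n}}\right).$$
Given $r,\eps>0$ and $a=(a_1,\ldots,a_n)$, $x=(x_1,\ldots,x_n)$, $y=(y_1,\ldots,y_n) \in X$ with $d(x,a)\leq r$, $d(y,a)\leq r$ and $d(x,y)\geq \eps$, I would first observe that for every $i$ one has $d_i(x_i,a_i)\leq r$ and $d_i(y_i,a_i)\leq r$. Next, since
$$\sum_{i=1}^n d_i^2(x_i,y_i) = d^2(x,y) \geq \eps^2,$$
the pigeonhole principle yields some index $j$ with $d_j(x_j,y_j) \geq \eps/\sqrt{n}$.

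For that coordinate $j$, property $(G)$ for $(X_j,d_j,W_j)$ gives
$$d_j^2\!\left(\frac{x_j+y_j}2,a_j\right) \leq \frac12 d_j^2(x_j,a_j) + \frac12 d_j^2(y_j,a_j) - \psi_j\!\left(r,\frac{\eps}{\sqrt{n}}\right),$$
while for every other $i\neq j$ Proposition~\ref{quad} gives
$$d_i^2\!\left(\frac{x_i+y_i}2,a_i\right) \leq \frac12 d_i^2(x_i,a_i) + \frac12 d_i^2(y_i,a_i).$$
Summing these $n$ inequalities, using that the squared product distance decomposes as a sum of squared factor distances, and then bounding $\psi_j(r,\eps/\sqrt{n}) \geq \psi(r,\eps)$ yields
$$d^2\!\left(\frac{x+y}2,a\right) \leq \frac12 d^2(x,a) + \frac12 d^2(y,a) - \psi(r,\eps),$$
as required.

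There is no real obstacle here; unlike in Proposition~\ref{ucwpres}, property $(G)$ gives an additive deficit in the squared distance rather than a multiplicative one in the distance itself, so the sum over coordinates assembles cleanly without needing to distinguish the case in which the separation of $x$ and $y$ is witnessed by the ``radii'' $d_i(x_i,a_i)$ versus the ``diameters'' $d_i(x_i,y_i)$, and no extraction of a square root is necessary. The only minor point worth flagging is the $1/\sqrt{n}$ loss in the $\eps$-argument of the modulus, which is unavoidable because the separation $\eps$ in the product norm need not be concentrated in any single coordinate.
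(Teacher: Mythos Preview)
Your proof is correct and essentially identical to the paper's: the same modulus $\psi(r,\eps)=\min_i \psi_i(r,\eps/\sqrt{n})$, the same pigeonhole to find a coordinate $j$ with $d_j(x_j,y_j)\geq \eps/\sqrt{n}$, property $(G)$ on that coordinate, and Proposition~\ref{quad} on the rest, summed componentwise. The paper presents the final computation by bounding the difference $\frac12 d^2(x,a)+\frac12 d^2(y,a)-d^2\!\left(\frac{x+y}{2},a\right)$ from below rather than $d^2\!\left(\frac{x+y}{2},a\right)$ from above, but that is purely cosmetic.
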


\begin{proof}
For each $i$, let $\psi_i$ be a modulus for the property $(G)$ of $(X_i,d_i,W_i)$. Define $\psi :(0, \infty) \times (0, \infty) \to (0,\infty)$ by putting, for any $r$, $\eps >0$,
$$\psi(r,\eps) := \min_i\ \psi_i \left( r, \frac{\eps}{\sqrt{n}} \right).$$
We will show that $(X,d,W)$ has property $(G)$ with modulus $\psi$.

Let $r$, $\eps>0$ and $a=(a_1,\ldots,a_n)$, $x=(x_1,\ldots,x_n)$, $y=(y_1,\ldots,y_n) \in X$ with $d(x,a) \leq r$, $d(y,a) \leq r$, $d(x,y) \geq \eps $. It is immediate that, for each $i \in \{1,\ldots,n\}$, $d_i(x_i,a_i) \leq r$ and $d_i(y_i,a_i) \leq r$. Assume towards a contradiction that, for every $i \in \{1,\ldots,n\}$, $d_i(x_i,y_i) < \eps/\sqrt{n}$. Then
$$\eps^2 \leq d^2(x,y) = \sum_{i=1}^n d_i^2(x_i,y_i) < n \cdot \frac{\eps^2}n = \eps^2,$$
a contradiction. Thus, there is a $j \in \{1,\ldots,n\}$ such that $ d_j(x_j,y_j) \geq \eps/\sqrt{n}$.

We have, using Proposition~\ref{quad} for the first inequality, that
\begin{align*}
\frac12 d^2(x,a) + \frac12 d^2(y,a) - d^2\left(\frac{x+y}2,a\right) &= \sum_{i=1}^n \left( \frac{d_i^2(x_i,a_i) + d_i^2(y_i,a_i)}2 - d_i^2\left(\frac{x_i+y_i}2,a_i\right) \right) \\
&\geq  \frac{d_j^2(x_j,a_j) + d_j^2(y_j,a_j)}2 - d_j^2\left(\frac{x_j+y_j}2,a_j\right)  \\
&\geq  \psi_j \left( r, \frac{\eps}{\sqrt{n}} \right) \\
& \geq  \psi(r,\eps),
\end{align*}
which is what we wanted to show.
\end{proof}

The proof of the preservation of the property $(M)$ follows the general structure of the proof of Proposition~\ref{ucwpres}.

\begin{proposition}
Assume that, for each $i$, $(X_i,d_i,W_i)$ has property $(M)$. Then $(X,d,W)$ has property $(M)$.
\end{proposition}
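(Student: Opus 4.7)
My plan is to adapt the two-case structure of Proposition~\ref{ucwpres}. For each $i$, let $\psi_i$ be a modulus of $(M)$ for $(X_i,d_i,W_i)$, and for $r,\eps>0$ set $\alpha := \min_i \psi_i(r,\eps/\sqrt{n})$ and $\delta := \alpha/(4nr)$; the modulus for $(X,d,W)$ will be $\psi(r,\eps) := \min(\alpha/2, \delta^2/4)$. Fix $a=(a_i)$, $x=(x_i)$, $y=(y_i) \in X$ satisfying the hypotheses of $(M)$, and split according to whether some coordinate exhibits a $\delta$-gap between $d_i(x_i,a_i)$ and $d_i(y_i,a_i)$.

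In \textbf{Case I}, there is $j$ with $|d_j(x_j,a_j) - d_j(y_j,a_j)| \geq \delta$. Setting $u := (d_i(x_i,a_i))_i$ and $v := (d_i(y_i,a_i))_i$ in $\R^n$, axiom $(W1)$ applied componentwise yields $d^2(\frac{x+y}{2},a) \leq \|\frac{u+v}{2}\|^2$, and the Euclidean parallelogram identity then rewrites this as $\frac{\|u\|^2+\|v\|^2}{2} - \frac{\|u-v\|^2}{4} \leq \max(d^2(x,a), d^2(y,a)) - \delta^2/4$, mimicking Case I of Proposition~\ref{ucwpres} with $\max$ in place of $r^2$. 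In \textbf{Case II}, all coordinate gaps are less than $\delta$. As in the UCW proof I pick $j$ with $d_j(x_j,y_j) \geq \eps/\sqrt{n}$ (which must exist since $d(x,y) \geq \eps$), apply $(M)$ in coordinate $j$ at radius $r$ and threshold $\eps/\sqrt{n}$ to gain at least $\alpha$ off of $\max(d_j^2(x_j,a_j), d_j^2(y_j,a_j))$, and use Proposition~\ref{quad} on the remaining coordinates to bound each $d_i^2(\frac{x_i+y_i}{2},a_i)$ by $\max(d_i^2(x_i,a_i), d_i^2(y_i,a_i))$; summing gives $d^2(\frac{x+y}{2},a) \leq \sum_i \max(d_i^2(x_i,a_i), d_i^2(y_i,a_i)) - \alpha$.

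The principal obstacle, and the point at which the proof diverges from that of the $(G)$ version, is that $\max$ does not distribute over sums of coordinates: generically $\sum_i \max(d_i^2(x_i,a_i), d_i^2(y_i,a_i))$ is strictly larger than $\max(d^2(x,a), d^2(y,a))$. The Case II assumption precisely controls this gap via $|d_i^2(x_i,a_i) - d_i^2(y_i,a_i)| = |d_i(x_i,a_i) - d_i(y_i,a_i)|\,(d_i(x_i,a_i) + d_i(y_i,a_i)) \leq 2r\delta$, so $\sum_i \max(d_i^2(x_i,a_i), d_i^2(y_i,a_i)) \leq \min(d^2(x,a), d^2(y,a)) + 2nr\delta \leq \max(d^2(x,a), d^2(y,a)) + 2nr\delta$. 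The choice $\delta = \alpha/(4nr)$ absorbs the error $2nr\delta = \alpha/2$ into the gain $\alpha$, leaving a net improvement of $\alpha/2$ and completing Case II; combining both cases gives $(M)$ with the advertised modulus.
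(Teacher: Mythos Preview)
Your proof is correct and follows essentially the same route as the paper's. Both arguments use the two-case split inherited from Proposition~\ref{ucwpres} (a ``large coordinate gap'' case handled by the Euclidean parallelogram identity on the vectors of distances, and a ``small gap'' case handled by applying $(M)$ in a single coordinate and absorbing the $\max$-over-sum defect via the gap bound), and in fact your modulus $\min(\alpha/2,\alpha^2/(64n^2r^2))$ coincides exactly with the paper's $\check\psi$. The only cosmetic differences are that the paper measures the gap on squared distances and splits one-sidedly (Case~I is $d_j^2(x_j,a_j)\geq d_j^2(y_j,a_j)+\delta$, Case~II then bounds every $\max$ directly by $d_i^2(y_i,a_i)+\delta$), whereas you measure the gap on distances and split symmetrically; after the paper converts its squared-distance gap into a distance gap via $\delta/(2r)$, the two computations become line-for-line identical.
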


\begin{proof}
For each $i$, let $\psi_i$ be a modulus for the property $(M)$ of $(X_i,d_i,W_i)$. Define $\psi$, $\check{\psi} :(0, \infty) \times (0, \infty) \to (0,\infty)$ by putting, for any $r$, $\eps >0$,
$$\psi(r,\eps) := \min_i\ \psi_i(r,\eps)$$
and 
$$\check{\psi}(r,\eps):= \min\left(\frac{\psi^2(r,\eps/\sqrt{n})}{64n^2r^2},\frac{\psi(r,\eps/\sqrt{n})}2\right).$$
We will show that $(X,d,W)$ has property $(M)$ with modulus $\check{\psi}$.

Let $r$, $\eps>0$ and $a=(a_1,\ldots,a_n)$, $x=(x_1,\ldots,x_n)$, $y=(y_1,\ldots,y_n) \in X$ with $d(x,a) \leq r$, $d(y,a) \leq r$, $d(x,y) \geq \eps $. It is immediate that, for each $i \in \{1,\ldots,n\}$, $d_i(x_i,a_i) \leq r$ and $d_i(y_i,a_i) \leq r$.

Set
$$\delta := \frac{\psi(r,\eps/\sqrt{n})}{2n}.$$
We will distinguish two cases.

{\bf Case I.} There is a $j \in \{1,\ldots, n\}$ such that $d_j^2(x_j,a_j) \geq d_j^2(y_j,a_j) + \delta$.

Note that, in this case, $d_j(x_j,a_j) \geq d_j(y_j,a_j)$, so
$$\delta \leq d_j^2(x_j,a_j) - d_j^2(y_j,a_j) \leq (d_j(x_j,a_j) - d_j(y_j,a_j)) \cdot 2r,$$
from which we get that $d_j(x_j,a_j) - d_j(y_j,a_j) \geq \delta/2r$.

Put $u:=(d_1(x_1,a_1),\ldots,d_n(x_n,a_n))$ and $v:=(d_1(y_1,a_1),\ldots,d_n(y_n,a_n))$, so $\|u\| = d(x,a) \leq r$ and $\|v\| = d(y,a) \leq r$. We have that
$$\|u-v\|^2 = \sum_{i=1}^n (d_i(x_i,a_i) - d_i(y_i,a_i))^2 \geq (d_j(x_j,a_j)-d_j(y_j,a_j))^2 \geq \frac{\delta^2}{4r^2},$$
so 
\begin{align*}
d^2\left(\frac{x+y}2,a\right) &= \sum_{i=1}^n d^2\left(\frac{x_i+y_i}2,a_i \right) \\
&\leq \sum_{i=1}^n \left(\frac{d(x_i,a_i)+d(y_i,a_i)}2\right)^2 \\
&= \left\|\frac{u+v}2\right\|^2 \\
&= \frac12 \|u\|^2 + \frac12 \|v\|^2 - \frac14 \|u-v\|^2 \\
&\leq \max(\|u\|^2, \|v\|^2) - \frac{\delta^2}{16r^2} \\
&= \max(d^2(x,a),d^2(y,a)) - \frac{\psi^2(r,\eps/\sqrt{n})}{64n^2r^2} \\
&\leq \max(d^2(x,a),d^2(y,a)) - \check{\psi}(r,\eps),
\end{align*}
so we are done in this case.

{\bf Case II.} For any $i \in \{1,\ldots, n\}$, $d_i^2(x_i,a_i) \leq d_i^2(y_i,a_i) + \delta$.

Assume towards a contradiction that, for every $i \in \{1,\ldots,n\}$, $d_i(x_i,y_i) < \eps /\sqrt{n}$. Then
$$\eps^2 \leq d^2(x,y) = \sum_{i=1}^n d_i^2(x_i,y_i) < n \cdot \frac{\eps^2}n = \eps^2,$$
a contradiction. Thus, there is a $j \in \{1,\ldots,n\}$ such that $ d_j(x_j,y_j) \geq \eps /\sqrt{n}$.

Therefore, using Proposition~\ref{quad} for the first inequality,
\begin{align*}
d^2\left(\frac{x+y}2,a\right) &= \sum_{i \neq j} d_i^2\left(\frac{x_i+y_i}2,a_i\right) + d_j^2\left(\frac{x_j+y_j}2,a_j\right)   \\
&\leq \sum_{i \neq j} \max(d_i^2(x_i,a_i),d_i^2(y_i,a_i)) + \max(d_j^2(x_j,a_j),d_j^2(y_j,a_j))- \psi_j(r,\eps/\sqrt{n}) \\
&\leq \sum_{i=1}^n (d_i^2(y_i,a_i) + \delta)- \psi(r,\eps/\sqrt{n})\\
&= d^2(y,a) + n\delta - \psi(r,\eps/\sqrt{n}) \\
&= d^2(y,a) - \frac{\psi(r,\eps/\sqrt{n})}2 \\
&\leq \max(d^2(x,a),d^2(y,a)) - \frac{\psi(r,\eps/\sqrt{n})}2 \\
&\leq \max(d^2(x,a),d^2(y,a)) - \check{\psi}(r,\eps),
\end{align*}
so we are done in this case, too.
\end{proof}

Now fix $\pi_1,\ldots,\pi_n$ such that, for each $i$, $(X_i,d_i,W_i,\pi_i)$ is a smooth hyperbolic space. Define $\pi: X^2 \times X^2 \to \R$ by putting, for any $x=(x_1,\ldots,x_n)$, $y=(y_1,\ldots,y_n)$, $u=(u_1,\ldots,u_n)$, $v=(v_1,\ldots,v_n) \in X$,
$$\piv{xy}{uv} := \sum_{i=1}^n \piiv{i}{x_iy_i}{u_iv_i}.$$

\begin{proposition}
$(X,d,W,\pi)$ is a smooth hyperbolic space.
\end{proposition}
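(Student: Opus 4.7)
The plan is to verify the five axioms (P1)--(P5) for the defined $\pi$ by reducing each one to the corresponding axiom for the factor spaces $\pi_i$, and then summing (or invoking Cauchy--Schwarz in one case). The construction is perfectly additive: both sides of most axioms split as sums over $i \in \{1,\ldots,n\}$ in a matched way, so the verifications should be quick.

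First, (P1) follows because $\piv{xy}{xy} = \sum_{i=1}^n \piiv{i}{x_iy_i}{x_iy_i} = \sum_{i=1}^n d_i^2(x_i,y_i) = d^2(x,y)$. Axioms (P2) and (P3) follow directly by pulling the minus sign (respectively the additive identity) inside the sum, using that each $\pi_i$ satisfies the analogous identity. For (P5), the key observation is that by definition $W(x,y,\lambda)_i = W_i(x_i,y_i,\lambda)$, so the $i$-th component instance of (P5) for $\pi_i$ reads
\[
d_i^2(W_i(x_i,y_i,\lambda),z_i) \le (1-\lambda)^2 d_i^2(x_i,z_i) + 2\lambda\, \piiv{i}{y_iz_i}{W_i(x_i,y_i,\lambda)\,z_i};
\]
summing over $i$ yields $(1-\lambda)^2 d^2(x,z) + 2\lambda\,\piv{yz}{W(x,y,\lambda)\,z}$ on the right, which is exactly (P5) for $\pi$.

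The only step that is not pure bookkeeping is (P4), which is the one I expect to be the main (though still mild) obstacle, because it requires something more than termwise monotonicity: applying (P4) for each $\pi_i$ only yields $\piv{xy}{uv} \le \sum_{i=1}^n d_i(x_i,y_i)\, d_i(u_i,v_i)$. To finish, I would apply the ordinary Cauchy--Schwarz inequality in $\R^n$ to the vectors $(d_i(x_i,y_i))_i$ and $(d_i(u_i,v_i))_i$, bounding this sum by
\[
\Bigl(\sum_{i=1}^n d_i^2(x_i,y_i)\Bigr)^{1/2} \Bigl(\sum_{i=1}^n d_i^2(u_i,v_i)\Bigr)^{1/2} = d(x,y)\, d(u,v),
\]
as required. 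With (P4) in hand, all five axioms are established and $(X,d,W,\pi)$ is a smooth hyperbolic space.
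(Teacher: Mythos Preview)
Your proposal is correct and matches the paper's proof essentially line for line: (P1)--(P3) are declared immediate, (P4) is obtained by applying the factorwise (P4) and then Cauchy--Schwarz in $\R^n$, and (P5) is obtained by summing the componentwise instances using $W(x,y,\lambda)_i = W_i(x_i,y_i,\lambda)$.
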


\begin{proof}
$(P1)$, $(P2)$ and $(P3)$ are immediate. We now prove $(P4)$. Let $x=(x_1,\ldots,x_n)$, $y=(y_1,\ldots,y_n)$, $u=(u_1,\ldots,u_n)$, $v=(v_1,\ldots,v_n) \in X$. We have that
\begin{align*}
\piv{xy}{uv} &= \sum_{i=1}^n \piiv{i}{x_iy_i}{u_iv_i}\\
&\leq \sum_{i=1}^n d_i(x_i,y_i)d_i(u_i,v_i)\\
&= \langle (d_1(x_1,y_1),\ldots,d_n(x_n,y_n)), (d_1(u_1,v_1),\ldots,d_n(u_n,v_n)) \rangle \\
&\leq \|(d_1(x_1,y_1),\ldots,d_n(x_n,y_n))\| \cdot \|(d_1(u_1,v_1),\ldots,d_n(u_n,v_n))\| \\
&= \left(\sum_{i=1}^n d_i^2(x_i,y_i) \right)^\frac12 \cdot\left(\sum_{i=1}^n d_i^2(u_i,v_i) \right)^\frac12 \\
&= d(x,y) d(u,v).
\end{align*}

We prove $(P5)$. Let $x=(x_1,\ldots,x_n)$, $y=(y_1,\ldots,y_n)$, $z=(z_1,\ldots,z_n) \in X$ and $\lambda \in [0,1]$. We have that
\begin{align*}
d^2(W(x,y,\lambda),z) &= \sum_{i=1}^n d_i^2(W(x,y,\lambda)_i,z_i)  \\
&= \sum_{i=1}^n d_i^2(W_i(x_i,y_i,\lambda),z_i) \\
&\leq \sum_{i=1}^n \left((1-\lambda)^2d_i^2(x_i,z_i) + 2\lambda\piiv{i}{y_iz_i}{W_i(x_i,y_i,\lambda)z_i} \right)  \\
&= \sum_{i=1}^n \left((1-\lambda)^2d_i^2(x_i,z_i) + 2\lambda\piiv{i}{y_iz_i}{W(x,y,\lambda)_iz_i} \right)  \\
&= (1-\lambda)^2 d^2(x,z) + 2\lambda\piv{yz}{W(x,y,\lambda)z}.
\end{align*}
\end{proof}

\begin{proposition}
Assume that, for each $i$, $(X_i,d_i,W_i,\pi_i)$ is a uniformly smooth hyperbolic space. Then $(X,d,W,\pi)$ is a uniformly smooth hyperbolic space.
\end{proposition}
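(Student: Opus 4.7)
The plan is to exploit the additive structure $\pi(\vv{xy},\vv{uv}) = \sum_{i=1}^n \pi_i(\vv{x_iy_i},\vv{u_iv_i})$ already set up for the smooth hyperbolic structure and combine the per-coordinate moduli of uniform continuity via Cauchy--Schwarz. First, the previous proposition already gives that $(X,d,W,\pi)$ is a smooth hyperbolic space, so the only thing to do is exhibit a modulus of uniform continuity $\omega$ for $\pi$ built from the moduli $\omega_1,\ldots,\omega_n$ of the factors.

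Concretely, I would define
$$\omega(r,\eps) := \min_{1\leq i \leq n} \omega_i\!\left(r,\frac{\eps}{\sqrt n}\right).$$
Fix $r,\eps>0$ and $a,u,v,x,y \in X$ with $d(u,a)\leq r$, $d(v,a)\leq r$ and $d(u,v)\leq \omega(r,\eps)$. The key observation is that the product metric dominates each coordinate metric, so $d_i(u_i,a_i)\leq r$, $d_i(v_i,a_i)\leq r$ and $d_i(u_i,v_i)\leq \omega_i(r,\eps/\sqrt n)$ for every $i$. Applying the hypothesis on each $\pi_i$ coordinatewise gives
$$|\piiv{i}{x_iy_i}{u_ia_i} - \piiv{i}{x_iy_i}{v_ia_i}| \leq \frac{\eps}{\sqrt n}\, d_i(x_i,y_i).$$

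The final step is to sum over $i$, use the triangle inequality to pull the absolute value inside, and then apply the Cauchy--Schwarz inequality in $\R^n$ to the vector of coordinate distances $(d_1(x_1,y_1),\ldots,d_n(x_n,y_n))$ against the constant vector $(1,\ldots,1)$. This yields
$$|\piv{xy}{ua}-\piv{xy}{va}| \leq \frac{\eps}{\sqrt n} \sum_{i=1}^n d_i(x_i,y_i) \leq \frac{\eps}{\sqrt n}\cdot \sqrt n \left(\sum_{i=1}^n d_i^2(x_i,y_i)\right)^{1/2} = \eps\, d(x,y),$$
which is exactly the definition of $\omega$ being a modulus of uniform continuity for $\pi$. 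There is essentially no obstacle here: the only point requiring some care is calibrating the argument of $\omega_i$ so that the Cauchy--Schwarz step at the end returns the factor $\eps$ cleanly, and the choice $\eps/\sqrt n$ is precisely what is needed (echoing the same $1/\sqrt n$ rescaling used in the earlier preservation proofs in this section).
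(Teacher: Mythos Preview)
Your proof is correct and follows essentially the same approach as the paper: the same modulus $\omega(r,\eps)=\min_i \omega_i(r,\eps/\sqrt{n})$, the same coordinatewise reduction, and the same Cauchy--Schwarz step against the constant vector $(1,\ldots,1)$ to recover $\eps\, d(x,y)$.
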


\begin{proof}
For each $i$, let $\omega_i$ be a modulus of uniform continuity for $\pi_i$. Define $\omega :(0, \infty) \times (0, \infty) \to (0,\infty)$ by putting, for any $r$, $\eps>0$,
$$\omega(r,\eps) := \min_i\ \omega_i \left( r, \frac{\eps}{\sqrt{n}} \right).$$
We will show that $\omega$ is a modulus of uniform continuity for $\pi$.

Let  $r$, $\eps>0$ and $a = (a_1,\ldots,a_n)$, $u = (u_1,\ldots,u_n)$, $v = (v_1,\ldots,v_n)$, $x = (x_1,\ldots,x_n)$, $y = (y_1,\ldots,y_n) \in X$ with $d(u,a) \leq r$, $d(v,a) \leq r$ and $d(u,v) \leq \omega(r,\eps)$. We want to show that
$$| \piv{xy}{ua} - \piv{xy}{va}| \leq \eps \cdot d(x,y).$$

It is immediate that, for each $i$, $d_i(u_i,a_i) \leq r$, $d_i(v_i,a_i) \leq r$ and
$$d_i(u_i,v_i) \leq \omega(r,\eps) \leq \omega_i \left( r, \frac{\eps}{\sqrt{n}} \right).$$
Thus, for each $i$,
$$| \piiv{i}{x_iy_i}{u_ia_i} - \piiv{i}{x_iy_i}{v_ia_i}| \leq \frac{\eps}{\sqrt{n}} \cdot d_i(x_i,y_i).$$

We have that
\begin{align*}
| \piv{xy}{ua} - \piv{xy}{va}| &= \left| \sum_{i=1}^n \left(\piiv{i}{x_iy_i}{u_ia_i} - \piiv{i}{x_iy_i}{v_ia_i}\right)\right| \\
&\leq \sum_{i=1}^n\left|\piiv{i}{x_iy_i}{u_ia_i} - \piiv{i}{x_iy_i}{v_ia_i}\right| \\
&\leq \frac{\eps}{\sqrt{n}} \sum_{i=1}^n d_i(x_i,y_i)\\
&= \frac{\eps}{\sqrt{n}} \cdot \langle (1,\ldots,1) , (d_1(x_1,y_1),\ldots,d_n(x_n,y_n)) \rangle\\
&\leq \frac{\eps}{\sqrt{n}} \cdot \|(1,\ldots,1)\| \cdot \|(d_1(x_1,y_1),\ldots,d_n(x_n,y_n)) \| \\
&= \frac{\eps}{\sqrt{n}} \cdot \sqrt{n} \cdot \left(\sum_{i=1}^n d_i^2(x_i,y_i) \right)^\frac12\\
&= \eps \cdot d(x,y),
\end{align*}
and we are done.
\end{proof}

\section{The example}\label{sec:xmpl}

We now proceed to give the announced example. Denote by $\R^2_3$ the Euclidean plane equipped with the $3$-norm. It is known that this is a uniformly convex and uniformly smooth Banach space which is not a Hilbert space. From the above results, it follows that $\Hh \times \R^2_3$ is a complete uniformly smooth $UCW$-hyperbolic space, and, thus, Reich's theorem and its consequences hold for it. We will now show that this space is neither a CAT(0) space, nor a convex subset of a normed space (with the canonical convexity structure).

\begin{proposition}
$\R^2_3$ is not a CAT(0) space.
\end{proposition}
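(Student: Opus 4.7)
The plan is to exhibit explicit points $a, x, y \in \R^2_3$ that violate the CAT(0) midpoint inequality \eqref{cat0}, using the characterization (recalled in Section~\ref{sec:ns}) that a $W$-hyperbolic space is CAT(0) iff \eqref{cat0} holds for all triples. The canonical $W$-hyperbolic structure on $\R^2_3$ is the linear one, so the midpoint of two vectors is simply their arithmetic mean, and $d(\cdot,\cdot)$ is the $3$-norm of the difference.

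Concretely, I would take $a := (0,0)$, $x := (1,1)$ and $y := (1,-1)$. Then $(x+y)/2 = (1,0)$, so the left-hand side of \eqref{cat0} equals $\|(1,0)\|_3^2 = 1$. On the right-hand side, $\|x\|_3^2 = \|y\|_3^2 = 2^{2/3}$ and $\|x-y\|_3^2 = \|(0,2)\|_3^2 = 4$, so the right-hand side evaluates to $2^{2/3} - 1 = 4^{1/3} - 1$, which is strictly less than $1$. Hence \eqref{cat0} fails and $\R^2_3$ is not CAT(0). The choice of $x, y$ is the classical witness to the failure of the parallelogram law for $\ell^p$-norms with $p \neq 2$: the ``diagonal'' vectors $x, y$ have norm $2^{1/3}$ while the ``axis-aligned'' vectors $x \pm y$ have norm $2$, and the resulting asymmetry is exactly what the CN-inequality \eqref{cat0} detects. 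There is essentially no obstacle here — the whole argument is a one-line arithmetic verification, and no appeal to the Jordan--von~Neumann theorem or an independent proof of the parallelogram law is required.
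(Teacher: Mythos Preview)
Your argument is correct: the explicit triple $a=(0,0)$, $x=(1,1)$, $y=(1,-1)$ does violate \eqref{cat0}, since the left-hand side equals $1$ while the right-hand side equals $2^{2/3}-1<1$.

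However, this is not the route the paper takes. The paper dispatches the proposition in one sentence by appealing to the known fact that a normed space is CAT(0) if and only if it is an inner product space, and $\R^2_3$ is not Hilbert. Your approach trades this black-box citation for an explicit, self-contained counterexample. What you gain is elementarity and independence from the Jordan--von~Neumann theorem; what the paper's approach gains is brevity and a conceptual explanation of \emph{why} the result holds (namely, the general incompatibility of non-Hilbert geometry with the CN-inequality), rather than a single witnessed failure. Both are perfectly valid; yours is arguably preferable in a context where one wants the paper to be fully self-contained.
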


\begin{proof}
It is known that a Banach space is a CAT(0) space iff it is Hilbert.
\end{proof}

\begin{corollary}
$\Hh \times \R^2_3$ is not a CAT(0) space.
\end{corollary}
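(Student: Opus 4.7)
The plan is to reduce the statement to the previous proposition by exhibiting $\R^2_3$ as a convex, isometrically embedded subspace of $\Hh \times \R^2_3$, and then to exploit the fact that the characterizing inequality \eqref{cat0} is a pointwise statement about the metric and the midpoint operation, hence descends to any convex subspace. Contrapositively, if $\Hh \times \R^2_3$ were CAT(0), so would be $\R^2_3$, contradicting the preceding proposition.

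Concretely, I would fix an arbitrary basepoint $h_0 \in \Hh$ and consider the slice $S := \{h_0\} \times \R^2_3$. Two routine verifications are required. First, the map $\iota : \R^2_3 \to S$, $u \mapsto (h_0,u)$, is an isometry, since
$$d((h_0,u),(h_0,v)) = \left(d_\Hh(h_0,h_0)^2 + \|u-v\|_3^2\right)^{1/2} = \|u-v\|_3.$$
Second, $S$ is convex in the product: applying $(W2)$ in the $\Hh$-factor with $x=y=h_0$ forces $d(W(h_0,h_0,\lambda),h_0)=0$, so $W(h_0,h_0,\lambda)=h_0$; hence the componentwise product $W$-operation restricted to $S$ stays inside $S$, and, transported via $\iota$, agrees with the canonical convex combination on $\R^2_3$.

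To conclude, I would assume for contradiction that $\Hh \times \R^2_3$ satisfies \eqref{cat0} and specialize the inequality to triples of the form $a=(h_0,a')$, $x=(h_0,x')$, $y=(h_0,y')$ with $a',x',y'\in\R^2_3$. Since the midpoint of $x$ and $y$ in the product lies in $S$ and corresponds via $\iota$ to the midpoint of $x'$ and $y'$ in $\R^2_3$, and the product metric restricted to $S$ pulls back to $\|\cdot\|_3$, the inequality collapses to \eqref{cat0} for $\R^2_3$ at the triple $a',x',y'$. By the characterization of CAT(0) $W$-hyperbolic spaces recalled around \eqref{cat0}, this would make $\R^2_3$ a CAT(0) space, contradicting the previous proposition. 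The only ``obstacle'' here is the bookkeeping above; no substantive difficulty arises, since CAT(0)-ness of a $W$-hyperbolic space is literally a pointwise inequality that is inherited by any convex (hence $W$-closed) subset.
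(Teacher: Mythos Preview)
Your proof is correct and follows essentially the same approach as the paper, which simply remarks that $\R^2_3$ can be seen as a convex subset of $\Hh \times \R^2_3$; you have merely spelled out the details of that embedding and of why the CAT(0) inequality \eqref{cat0} descends to convex subsets.
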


\begin{proof}
We use the fact that $\R^2_3$ can be seen as a convex subset of $\Hh \times \R^2_3$.
\end{proof}

\begin{proposition}
$\Hh$ is not a convex subset of a normed space.
\end{proposition}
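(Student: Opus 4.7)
The plan is to identify a metric identity that must hold in any convex subset of a normed space with the canonical convexity structure, and then exhibit a concrete triple of points in $\Hh$ for which the identity fails. Suppose, for contradiction, that $\Hh$ is isometric to a convex subset $C$ of some normed space $(V,\|\cdot\|)$, where the induced $W$-structure on $C$ is the canonical one, namely $W(x,y,\lambda)=(1-\lambda)x+\lambda y$. Then for any three points $a,b,c\in\Hh$ we would have
$$d(W(a,b,\tfrac12),W(a,c,\tfrac12)) = \left\|\frac{a+b}{2}-\frac{a+c}{2}\right\| = \frac12\|b-c\| = \frac12 d(b,c), \qquad(\star)$$
since in a vector space the difference of two midpoints sharing a common endpoint is independent of that endpoint.

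I would then witness the failure of $(\star)$ with the symmetric triple $a=(0,1)$, $b=(-1,1)$, $c=(1,1)$. Applying the midpoint formula \eqref{mid-h} yields $W(a,b,\tfrac12)=(-1/2,\sqrt{5}/2)$ and $W(a,c,\tfrac12)=(1/2,\sqrt{5}/2)$; then the distance formula \eqref{dist-h} gives $d(W(a,b,\tfrac12),W(a,c,\tfrac12))=\arcosh(7/5)$ while $d(b,c)=\arcosh(3)$. To compare these, I would invoke the double-angle identity $\cosh(2t)=2\cosh^2(t)-1$: equality in $(\star)$ would force $2(7/5)^2-1=3$, i.e.\ $73/25=3$, which is false. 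Hence $(\star)$ fails, yielding the desired contradiction.

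The step most worth underlining is the reduction to $(\star)$: in a normed space the canonical midpoint map is affine, so $a\mapsto W(a,b,\tfrac12)-W(a,c,\tfrac12)$ is constant — a flatness phenomenon that cannot survive in the negatively curved $\Hh$. Once this reduction is established, choosing $a,b,c$ on the horizontal line $x_2=1$ keeps both the midpoints and the distances cheap to evaluate directly from the formulas recalled earlier in the section.
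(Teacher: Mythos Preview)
Your proof is correct and follows essentially the same approach as the paper: both isolate the affine identity $d(W(x,z,\lambda),W(y,z,\lambda))=(1-\lambda)d(x,y)$ that must hold in any convex subset of a normed space, and then exhibit a concrete triple in $\Hh$ where it fails. The only cosmetic differences are the choice of witnesses (the paper takes $x=(0,1)$, $y=(1,1)$, $z=(0,2)$, $\lambda=1/2$ and compares the resulting logarithms directly, whereas you take a symmetric triple on the line $x_2=1$ and verify the failure via the double-angle formula $\cosh(2t)=2\cosh^2 t-1$).
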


\begin{proof}
It is immediate that any $W$-hyperbolic space $X$ arising from the canonical convexity structure of a convex subset of a normed space satisfies the following: for any $x$, $y$, $z \in X$ and any $\lambda \in [0,1]$,
$$d(W(x,z,\lambda),W(y,z,\lambda)) = (1-\lambda)d(x,y).$$
We will now exhibit $x$, $y$, $z \in \Hh$ and $\lambda \in [0,1]$ such that
$$d(W(x,z,\lambda),W(y,z,\lambda)) < (1-\lambda)d(x,y).$$
Take $x:=(0,1)$, $y:=(1,1)$, $z:=(0,2)$, $\lambda:=1/2$. By \eqref{mid-h}, we have that $W(x,z,\lambda)=(0,\sqrt{2})$ and $W(y,z,\lambda)=(2/3,2\sqrt{5}/3)$. Using \eqref{dist-h}, we get that
$$d(x,y) = \arcosh(3/2) = \ln((3+\sqrt{5})/2)$$
and
$$d(W(x,z,\lambda),W(y,z,\lambda)) = \arcosh (7/(2\sqrt{10})) = \ln(\sqrt{10}/2).$$

We have to show that
$$\ln(\sqrt{10}/2) < \ln((3+\sqrt{5})/2)/2.$$
But this follows from the strict inequality
$$(\sqrt{10}/2)^2 < (3+\sqrt{5})/2,$$
which is immediate.
\end{proof}

\begin{corollary}
$\Hh \times \R^2_3$ is not a convex subset of a normed space.
\end{corollary}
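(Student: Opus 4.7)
The plan is to reduce this to the preceding proposition by exhibiting $\Hh$ itself as a convex subset (in the $W$-hyperbolic sense) of the product $\Hh \times \R^2_3$ that is also isometric to $\Hh$; then, if the product were a convex subset of a normed space, so would be the embedded copy of $\Hh$, contradicting the previous proposition.

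Concretely, I would fix any point $p_0 \in \R^2_3$ and consider the set $\Hh \times \{p_0\}$ together with the map $\iota : \Hh \to \Hh \times \R^2_3$ given by $\iota(x) = (x, p_0)$. From the definition of the product distance and the product convexity mapping used throughout Section~\ref{sec:main}, one immediately gets, for any $x$, $y \in \Hh$ and $\lambda \in [0,1]$,
$$d(\iota(x),\iota(y)) = \sqrt{d_\Hh^2(x,y) + d_{\R^2_3}^2(p_0,p_0)} = d_\Hh(x,y),$$
and $W(\iota(x),\iota(y),\lambda) = (W_\Hh(x,y,\lambda), W_{\R^2_3}(p_0,p_0,\lambda)) = \iota(W_\Hh(x,y,\lambda))$. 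Hence $\iota$ is an isometric embedding of $\Hh$ onto a convex subset $\Hh \times \{p_0\}$ of $\Hh \times \R^2_3$ that also preserves the $W$-hyperbolic structure.

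Now I would argue by contraposition. Suppose $\Hh \times \R^2_3$ is a convex subset of some normed space $E$ with its canonical convexity structure inherited from $E$. Then the subset $\Hh \times \{p_0\}$ is closed under the convex combinations of $\Hh \times \R^2_3$, which in turn are precisely the restrictions of the affine combinations in $E$; hence $\Hh \times \{p_0\}$ is itself a convex subset of $E$ with its canonical convexity structure. Transporting along the isometric, convexity-preserving bijection $\iota$, one would obtain that $\Hh$ is a convex subset of a normed space with its canonical convexity structure, contradicting the preceding proposition.

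There is no real technical obstacle: the only thing to be a bit careful about is that the notion of being a convex subset of a normed space under the canonical convexity structure is preserved both by taking $W$-convex subsets and by isometries that commute with $W$, and both verifications are immediate from the formulas above. The conclusion then follows directly.
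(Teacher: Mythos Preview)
Your proof is correct and follows exactly the paper's approach: the paper's own proof is the one-line remark that $\Hh$ can be seen as a convex subset of $\Hh \times \R^2_3$, and you have simply spelled out the details of that embedding and the contraposition. Nothing further is needed.
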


\begin{proof}
We use the fact that $\Hh$ can be seen as a convex subset of $\Hh \times \R^2_3$.
\end{proof}

\section{Acknowledgements}

The first author was supported by the German Science Foundation (DFG) project Ref.\ PI 2070/1-1, Projektnummer 549333475.

The authors would like to thank Ulrich Kohlenbach for his input regarding the preservation of uniform convexity under products.

\end{document}